\theoremstyle{plain}
\newtheorem{thm}{Theorem}[section]
\newtheorem{prop}[thm]{Proposition}
\newtheorem{lem}[thm]{Lemma}
\theoremstyle{definition}
\theoremstyle{remark}
\numberwithin{equation}{section}
\newcommand{\ud}{\mathrm{d}}
\newcommand{\RR}{\mathbb{R}}
\newcommand{\f}{\frac}
\newcommand{\pp}[2]{\frac{\partial{#1}}{\partial{#2}}}
\newcommand{\pppp}[4]%
  {\frac{\partial^3{#1}}{\partial{#2}\partial{#3}\partial{#4}}}
\newcommand{\p}{\phi}
\newcommand{\pab}{\alpha\phi\left(\frac{\beta}{\alpha}\right)}
\newcommand{\sq}{\frac{(\alpha+\beta)^2}{\alpha}}
\renewcommand{\a}{\alpha}
\renewcommand{\b}{\beta}
\newcommand{\ab}{(\alpha,\beta)}
\newcommand{\ta}{\tilde\alpha}
\newcommand{\tb}{\tilde\beta}
\newcommand{\ba}{\bar\alpha}
\newcommand{\bb}{\bar\beta}
\newcommand{\bra}{\breve{\alpha}}
\newcommand{\aij}{a_{ij}}
\newcommand{\bi}{b_i}
\newcommand{\bj}{b_j}
\newcommand{\bij}{b_{i|j}}
\newcommand{\taij}{\tilde a_{ij}}
\newcommand{\tbij}{\tilde b_{i|j}}
\newcommand{\baij}{\bar a_{ij}}
\newcommand{\bbij}{\bar b_{i|j}}
\newcommand{\G}{{}^\alpha G^i}
\newcommand{\tG}{{}^{\tilde\alpha}G^i}
\newcommand{\bG}{{}^{\bar\alpha}G^i}
\newcommand{\Ric}{{}^\alpha\mathrm{Ric}}
\newcommand{\tRic}{{}^{\tilde{\alpha}}{\mathrm{Ric}}}
\newcommand{\bRic}{{}^{\bar{\alpha}}{\mathrm{Ric}}}
\newcommand{\brRic}{{}^{\breve{\alpha}}\mathrm{Ric}}
\begin{document}
\title{On Einstein Square Metrics
\footnotetext{\emph{Keywords}: $\ab$-metric, Douglas metric, Ricci curvature.
\\
\emph{Mathematics Subject Classification}: 53B40, 53C60.}}
\author{Zhongmin Shen\footnote{supported in part by a NSF grant~(DMS-0810159) and a NSFC grant (No.11171297)} and Changtao Yu\footnote{supported by a NSFC grant(No.11026097)}}
\maketitle

\begin{abstract}
In this paper, we study an important class of Finsler metrics --- square metrics. We give two expressions of such metrics in terms of a Riemannian metric and a $1$-form. We show that Einstein square metrics can be classified up to the classification of Einstein Riemannian metrics.

\end{abstract}

\section{Introduction}

One of important problems in Finsler geometry is to characterize and construct Finsler metrics with constant flag curvature or Ricci curvature, the later are called Einstein metrics. Many valuable results have been achieved, most of which are related to a special class of Finsler metrics named $(\a,\b)$-metrics due to its computability.

$(\a,\b)$-metrics, given in the following form
$$F=\a\p(s),\quad s=\f{\b}{\a},$$
are defined by a Riemannian metric $\a$, an 1-form $\b$ and a smooth function $\phi(s)$. The simplest and most important $(\a,\b)$-metrics are Randers metrics $F=\a+\b$, which can be rewritten via the navigation problem as
\begin{eqnarray}\label{NPexpression}
F=\f{\sqrt{(1-|W|^2_h)h^2+(W^\flat)^2}}{1-|W|_h^2}-\f{W^\flat}{1-|W|^2_h},
\end{eqnarray}
where $h$ and $W$ are a Riemannian metric and a vector field respectively. They are related to the original data by
$$h=\sqrt{1-b^2}\sqrt{\a^2-\b^2},\quad W^\flat=-(1-b^2)\b.$$
Notice that the Randers metric $F=\a+\b$ is regular if and only if $b:=\|\b\|_\a<1$,  or equivalently, $|W|_h<1$.

In 2004, D. Bao, C. Robles and the first author classified Randers metrics with constant flag curvature (\cite{db-robl-szm-zerm}) using the navigation expression in (\ref{NPexpression}):~$F=\a+\b$ is of constant flag curvature if and only if $h$ is of constant sectional curvature and $W$ is a homothetic  vector field with respect to $h$. i.e.,
$$W_{i|j}+W_{j|i}=ch_{ij}$$
for some constant $c$.

Also with the help of the navigation problem, D. Bao and C. Robles give a characterization for Einstein metrics of Randers type\cite{db-robl-onri}:~$F=\a+\b$ is an Einstein metric if and only if $h$ is an Einstein metric and $W$ is a homothetic vector field with respect to $h$.

Besides Randers metrics, there are other interesting $\ab$-metrics. For example, the following metric
\begin{eqnarray}\label{berwald}
F=\frac{(\sqrt{(1-|x|^2)|y|^2+\langle x,y\rangle^2}+\langle
x,y\rangle)^2}{(1-|x|^2)^2\sqrt{(1-|x|^2)|y|^2+\langle
x,y\rangle^2}},
\end{eqnarray}
constructed by L. Berwald in 1929, is projectively flat on the unit ball $\mathbb B^n$ with constant flag curvature $K=0$\cite{berw}. Berwald's metric can be expressed as
\begin{eqnarray}\label{square}
F=\f{(\a+\b)^2}{\a}
\end{eqnarray}
where
$$\a=\f{\sqrt{(1-|x|^2)|y|^2+\langle x,y\rangle^2}}{(1-|x|^2)^2},\quad\b=\f{\langle x,y\rangle}{(1-|x|^2)^2}.$$
An $(\alpha,\beta)$-metric in the form (\ref{square}) is called a {\it square metric}.

In 2007, B. Li and the first author classified projectively flat $\ab$-metrics with constant flag curvature\cite{lb-szm-onac}. The result shows that such Finsler metrics of non-Randers type are essentially square metrics. Later on, L. Zhou proves that any square metric with constant flag curvature must be locally projectively flat\cite{zlf}.

It seems that square metrics  play a particular role in Finsler geometry, just as Randers metrics.
The main purpose of this paper is to determine the structure of Einstein square metrics $F=\frac{(\a+\b)^2}{\a}$. Firstly, we have the following result.

\begin{thm}\label{main2}
Let $F=\sq$ be a Finsler metric on an $n$-dimensional manifold $M$. Then $F$ is an Einstein metric if and only if it is Ricci flat and
\begin{eqnarray}
\Ric&=&c^2(1-b^2)^2\left\{-[5(n-1)+2(2n-5)b^2]\a^2+6(n-2)\b^2\right\},\label{eq1}\\
\bij&=&c(1-b^2)\left\{(1+2b^2)a_{ij}-3b_ib_j\right\},\label{eq2}
\end{eqnarray}
where $c$ is a constant number.
\end{thm}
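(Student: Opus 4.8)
The plan is to compute the Ricci scalar of $F$ directly from its spray and then read the Einstein equation $\mathrm{Ric}_F=(n-1)K\,F^2$ as an algebraic identity in $y$. First I would record the spray coefficients of a general $\ab$-metric in the standard form $G^i=\G+\a Q s^i{}_0+\Psi(r_{00}-2\a Q s_0)b^i+\Theta(r_{00}-2\a Q s_0)\f{y^i}{\a}+(\text{terms built from }\ri,\si)$, where $Q=\p'/(\p-s\p')$ and $\Psi,\Theta$ are the usual scalar functions of $(b^2,s)$ attached to $\p$, and then specialize to $\p=(1+s)^2$. For this $\p$ one has $\p'=2(1+s)$, $\p''=2$, so $\p-s\p'=1-s^2$ gives the pleasant $Q=\f{2}{1-s}$, and $\p-s\p'+(b^2-s^2)\p''=1+2b^2-3s^2$ gives $\Psi=\f{1}{1+2b^2-3s^2}$. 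Substituting these into the standard formula $\mathrm{Ric}_F=R^m{}_m$ with $R^i{}_k=2\partial_{x^k}G^i-y^j\partial_{x^j}\partial_{y^k}G^i+2G^j\partial_{y^j}\partial_{y^k}G^i-\partial_{y^j}G^i\,\partial_{y^k}G^j$ expresses $\mathrm{Ric}_F$ in terms of $\Ric$, the covariant data $\rij,\sij$ and their derivatives, and the scalars $b^2,s$; every denominator that appears is a power of $\a$, of $(\a-\b)$ (from $Q$), or of $(1+2b^2)\a^2-3\b^2$ (from $\Psi$).

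Next I would write the Einstein equation as $\mathrm{Ric}_F=(n-1)K\,\f{(\a+\b)^4}{\a^2}$ and clear all denominators, turning it into a single polynomial identity in $y$ in which $\a=\sqrt{\aij y^iy^j}$ enters only through $\a^2$ and through isolated odd-degree factors of $\a$. Since $\aij$ is positive definite, $\a$ is irrational over the field of rational functions of $y$, so the identity splits into its rational part (a polynomial in $\a^2$ and $\b$) and its irrational part (the terms carrying a single factor $\a$), each of which must vanish identically.

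The irrational part collects precisely the terms linear in the antisymmetric tensor $\sij$ together with the lowest-order terms in $\rij$. Comparing coefficients as polynomials in $y$ — using that $\a^2$ is irreducible and coprime to $\b$ — forces $\sij=0$, so $\b$ is closed, and pins down $\bij=\rij$ to be an algebraic combination of the only available tensors $\aij$ and $\bi\bj$. Writing $\bij=\lambda\aij+\mu\bi\bj$ and matching the surviving coefficients yields $\lambda=c(1-b^2)(1+2b^2)$ and $\mu=-3c(1-b^2)$ for a scalar $c$, which is exactly (\ref{eq2}); contracting (\ref{eq2}) and using the commutation (Ricci) identities for $b_{i|j|k}$ then shows $c$ is constant. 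Feeding $\sij=0$ and (\ref{eq2}) back into the rational part leaves an identity relating $\Ric$ to $K$, $b^2$, $\a^2$ and $\b^2$; separating the $\a^2$- and $\b^2$-coefficients produces both (\ref{eq1}) and the forced value $K=0$, i.e.\ $F$ is Ricci flat. The converse is the same computation read backwards: substituting (\ref{eq2}) with constant $c$ into $\mathrm{Ric}_F$ returns (\ref{eq1}) and $\mathrm{Ric}_F=0$.

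The main obstacle is not the strategy but the volume of the computation and organizing the separation so that the cancellations are visible. I expect the most delicate points to be (i) handling the derivative terms $r_{ij|k}$ and $r_{i|j}$ that occur in $R^m{}_m$ and checking that they are consistent with (\ref{eq2}) rather than imposing extra constraints, and (ii) the exact coefficient matching that produces the specific combinations $5(n-1)+2(2n-5)b^2$ and $6(n-2)$ in (\ref{eq1}), where the dimension $n$ enters through the trace $a^{ij}\rij$ and where arithmetic slips are most likely. Establishing rigorously that $c$ is a genuine constant, rather than merely a function of $x$, is the one place where an extra integrability argument beyond the pointwise coefficient comparison is needed.
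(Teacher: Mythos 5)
Your proposal takes a genuinely different route from the paper, and it is precisely the route the paper deliberately avoids. The paper's proof of Theorem \ref{main2} is a short reduction to quoted results: by the proof of Lemma 4.2 in \cite{zlf}, an Einstein square metric automatically satisfies $s^k{}_0s_{k0}=0$ and $r_{ij}=\tau\{(1+2b^2)a_{ij}-3b_ib_j\}$ (this is (\ref{sij})); the first identity gives $s_{ij}=0$, i.e.\ $\beta$ is closed (Lemma \ref{closed}); closedness together with the displayed form of $b_{i|j}$ makes $F$ of Douglas type by \cite{LSShen}; therefore the previously known Theorem \ref{original} applies, and a one-line lemma (the derivative of $\tau/(1-b^2)$ vanishes, using $\tau_i=-2\tau^2b_i$ and $(b^2)_{|i}=2\tau(1-b^2)b_i$) turns (\ref{RicRic})--(\ref{bij}) into (\ref{eq1})--(\ref{eq2}) with $c$ constant. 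You instead propose to recompute the Ricci curvature of $F$ with no hypothesis on $\beta$ and to extract all conditions by coefficient comparison; this is exactly the computation that, as the paper notes in Section 2, had only ever been carried out in the literature under the extra assumption that $F$ is Douglas or $\beta$ is closed, because of its complexity.

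Within your plan there is one genuine gap, and it sits exactly at the paper's key new observation. You claim the irrational part of the cleared Einstein equation collects the terms linear in $s_{ij}$ and that coefficient matching ``forces $s_{ij}=0$.'' That is not how closedness is obtained, and linear information alone will not give it: the terms of the Ricci curvature that are linear in $s_{ij}$ involve the derivatives $s^i{}_{0|i}$, $s_{0|0}$ and products of $s$ with $r$, and matching their coefficients yields divergence-type identities, not the vanishing of the tensor itself. In the known derivations (this is the content of Lemma 4.2 of \cite{zlf}, which the paper invokes) one isolates a term \emph{quadratic} in $s$, obtaining $s^k{}_0s_{k0}=0$, and then uses that $s^k{}_0s_{k0}=a^{kl}s_{k0}s_{l0}$ is a positive semi-definite quantity, so positive definiteness of $a^{ij}$ forces $s_{k0}=0$. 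Without this (or an equivalent definiteness argument) your rational/irrational split does not deliver closedness, and everything downstream --- the ansatz $b_{i|j}=\lambda a_{ij}+\mu b_ib_j$, the determination of $\lambda,\mu$, and the forced Ricci-flatness --- is blocked. A second, smaller point: in Theorem \ref{original} the relation $\tau_i=-2\tau^2b_i$ appears as an additional output of the Einstein equations, on a par with (\ref{RicRic}) and (\ref{bij}); your suggestion to recover it from the Ricci identity for $b_{i|j|k}$ contracted against the curvature information in (\ref{eq1}) is plausible, but it is a substantive computation you would actually have to carry out, after which the constancy of $c=\tau/(1-b^2)$ follows as in the paper.
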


Theorem \ref{main2} under an additional condition of $\beta$ being closed is proved in \cite{zrrr}. Theorem \ref{main2} under an additional condition of $F$ being of Douglas type is proved independently by Cheng-Tian\cite{cxy-tyf} and  Sevim-Shen-Zhao 
\cite{sevim-szm-zll}
 independently. Recently,  Chen-Shen-Zhao find explicit solutions of (\ref{eq1}) and (\ref{eq2}) in terms  of $(n-1)$-dimensional Riemannian Einstein metrics   \cite{cb-szm-zll}.

Using suitable deformations of $\alpha$ and $\beta$, we can express a square metric $F=(\alpha+\beta)^2/\alpha$ using another set of Riemannian metric $\tilde{\alpha}$ and $1$-form  $\tilde{\beta}$   so that the square metric is an Einstein metric if and only if $\tilde{\alpha}$ is an Einstein metric and $\tilde{\beta}$ is conformal (see Theorem \ref{main} below). This is very similar to Randers metrics. 
Then we can also determine the local structure of Einstein square metrics (see Proposition \ref{c} below).

\begin{thm}\label{main}
Let $F=\sq$ be a Finsler metric on an $n$-dimensional manifold $M$. Then the following are equivalent:
\begin{enumerate}[(1)]
\item $F$ is an Einstein metric;
\item The Riemannian metric $\ta:=(1-b^2)\a$ and the $1$-form $\tb:=\sqrt{1-b^2}\b$ satisfy
\begin{eqnarray}\label{tildeab}
\tRic=-(n-1)c^2\ta,\qquad\tbij=c\sqrt{1+\tilde b^2}\taij,
\end{eqnarray}
where $c$ is a constant number, $\tilde b=\|\tb\|_{\ta}$, and $\tbij$ is the covariant derivation of $\tb$ with respect to $\ta$. In this case, $F$ is given in the following form
\begin{eqnarray}\label{expression1}
F=\f{(\sqrt{1+\tilde b^2}\ta+\tb)^2}{\ta}
\end{eqnarray}
with $(1+\tilde{b}^2)(1-b^2) =1$.
\item The Riemannian metric $\ba:=(1-b^2)^\frac{3}{2}\sqrt{\a^2-\b^2}$ and the $1$-form $\bb:=(1-b^2)^2\b$ satisfy
\begin{eqnarray}\label{barab}
\bRic=0,\qquad\bbij=c\baij,
\end{eqnarray}
where $c$ is a constant number, $\bar b=\|\bb\|_{\ba}$ and $\bbij$ is the covariant derivation of $\bb$ with respect to $\ba$. In this case, $F$ is given in the following form
\begin{eqnarray}\label{expression2}
F=\f{(\sqrt{(1-\bar b^2)\ba^2+\bb^2}+\bb)^2}{(1-\bar b^2)^2\sqrt{(1-\bar b^2)\ba^2+\bb^2}}
\end{eqnarray}
with $\bar{b}= b$.
\end{enumerate}
\end{thm}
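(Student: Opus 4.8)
The plan is to deduce everything from Theorem \ref{main2}, which already asserts that $F=\sq$ is Einstein if and only if $(\a,\b)$ satisfies (\ref{eq1}) and (\ref{eq2}). So the whole task reduces to transporting these two differential conditions across the two explicit deformations and checking the two closed forms. The expression identities and norm relations are pure algebra and I would dispatch them first: writing $\lambda:=1-b^2$, one has $\taij=\lambda^2\aij$, $\tbi=\sqrt\lambda\,\bi$, whence $\tilde b^2=b^2/\lambda$ and $(1+\tilde b^2)\lambda=1$; substituting these into $(\sqrt{1+\tilde b^2}\ta+\tb)^2/\ta$ collapses it to $\sq=F$, giving (\ref{expression1}). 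Likewise $\baij=\lambda^3(\aij-\bi\bj)$, $\bbi=\lambda^2\bi$, and a Sherman--Morrison inversion of $\aij-\bi\bj$ yields $\bar b^2=b^2$; then $(1-\bar b^2)\ba^2+\bb^2=\lambda^4\a^2$ and the right side of (\ref{expression2}) reduces to $F$. I would then prove (1)$\Leftrightarrow$(2) by a conformal change and (2)$\Leftrightarrow$(3) by a Zermelo navigation change.

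For (1)$\Leftrightarrow$(2): the map $\aij\mapsto\taij=\lambda^2\aij$ is conformal with $\rho:=\ln\lambda$. The first key computation differentiates $\lambda$ using (\ref{eq2}): since $b^kb_{k|i}=c\lambda\{(1+2b^2)-3b^2\}\bi=c\lambda^2\bi$, one gets $\partial_i\lambda=-2b^kb_{k|i}=-2c\lambda^2\bi$, hence $\rho_i=-2c\lambda\,\bi$. Feeding $\rho_i$ into the conformal transformation law for the covariant derivative of a $1$-form, $\tbij=\sqrt\lambda\,\bij+(\text{terms in }\rho_i,\rho^k)$, and substituting (\ref{eq2}), the $\bi\bj$-terms cancel identically while the $\aij$-coefficient collapses to $c\lambda^{3/2}=c\sqrt{1+\tilde b^2}\,\lambda^2$; that is exactly $\tbij=c\sqrt{1+\tilde b^2}\,\taij$. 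The second key computation is the conformal transformation of the Ricci tensor in dimension $n$,
\begin{equation*}
\tilde R_{ij}=R_{ij}-(n-2)(\rho_{i|j}-\rho_i\rho_j)-\big(\rho^k{}_{|k}+(n-2)\rho_k\rho^k\big)\aij,
\end{equation*}
into which I substitute the Hessian $\rho_{i|j}=-2c^2\lambda^2\{(1+2b^2)\aij-5\bi\bj\}$ (obtained by differentiating $\rho_i=-2c\lambda\bi$ once more via (\ref{eq2})) together with (\ref{eq1}). Again the $\bi\bj$-terms cancel and the $\aij$-coefficient collapses to the single constant $-(n-1)c^2\lambda^2$, i.e. the Einstein condition $\tilde R_{ij}=-(n-1)c^2\taij$, which is (\ref{tildeab}). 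Since the deformation is invertible, with $\a=(1+\tilde b^2)\ta$ and $\b=\sqrt{1+\tilde b^2}\,\tb$, running the same identities backward gives the converse.

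For (2)$\Leftrightarrow$(3): expressing the barred data through $(\ta,\tb)$ gives $\baij=\f{1}{1+\tilde b^2}\big(\taij-\f{\tbi\tbj}{1+\tilde b^2}\big)$ and $\bbi=\f{\tbi}{(1+\tilde b^2)^{3/2}}$, which is precisely the Zermelo navigation change of data determined by the form $\tb$ on $\ta$ (up to the conformal normalization $\lambda$). I would then transfer the two conditions using the navigation transformation formulas for the Ricci tensor and for $\nabla\tb$, i.e. the square-metric analogue of the Bao--Robles correspondence for Randers metrics: the closed conformal form $\tb$ with the non-constant factor $c\sqrt{1+\tilde b^2}$ on the Einstein metric $\ta$ (Einstein constant $-(n-1)c^2$) is converted into a closed homothetic form $\bb$ with the constant factor $c$ on a Ricci-flat metric, so that (\ref{tildeab}) becomes $\bRic=0$ and $\bbij=c\baij$, i.e. (\ref{barab}). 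Because $\bar b=b$, this returns to the original data and to the form (\ref{expression2}). As an alternative I could bypass (2) and prove (1)$\Leftrightarrow$(3) directly by the combined deformation, computing $\bar R_{ij}$ and $\bbij$ from $\alpha$-quantities, substituting (\ref{eq1})--(\ref{eq2}); this parallels the conformal argument but with base metric $\sqrt{\a^2-\b^2}$ in place of $\a$.

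The main obstacle is the second-order conformal computation of the Ricci tensor: it requires the Hessian of $\rho=\ln(1-b^2)$, hence a second application of (\ref{eq2}), and the delicate simultaneous cancellation of all $\bi\bj$-terms together with the collapse of the $\aij$-coefficient to the lone constant $-(n-1)c^2$. The conceptual crux is the navigation step for (2)$\Leftrightarrow$(3): one must verify that the Zermelo transformation trades the curvature of $\ta$ against the non-constant conformal factor of $\tb$ exactly so as to produce a Ricci-flat $\ba$ carrying a genuinely homothetic $\bb$. Care is also needed to make each translation a true equivalence rather than a one-way implication, which I handle throughout by invoking the explicit invertibility of both deformations.
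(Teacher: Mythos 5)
Your reduction to Theorem \ref{main2}, the algebraic identities ($\tilde b^2=b^2/(1-b^2)$, $(1+\tilde b^2)(1-b^2)=1$, $\bar b=b$, and the collapse of (\ref{expression1}) and (\ref{expression2}) back to $\sq$), and your step (1)$\Leftrightarrow$(2) are correct. Your route for (1)$\Leftrightarrow$(2) through the classical conformal transformation laws, with $\rho=\ln(1-b^2)$ and $\rho_i=-2c(1-b^2)b_i$, is the tensorial counterpart of what the paper does with sprays: the paper computes $\tG=\G+\tau(\a^2b^i-2\b y^i)$, which is exactly the conformal deformation term your $\rho_i$ produces, and then applies the curvature-difference formula (\ref{riccichange}); your Hessian and cancellation claims check out and do yield $\tbij=c\sqrt{1+\tilde b^2}\,\taij$ and $\tRic=-(n-1)c^2\ta^2$.

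The genuine gap is in your treatment of (3). Your primary argument for (2)$\Leftrightarrow$(3) invokes ``navigation transformation formulas \dots the square-metric analogue of the Bao--Robles correspondence.'' No such off-the-shelf formulas exist to cite: the Bao--Robles results are specific to Randers metrics $F=\a+\b$ and their navigation data, and they govern the Einstein condition of the \emph{Randers} metric built from $(h,W)$, not the transport of the pair of Riemannian conditions (\ref{tildeab}) into (\ref{barab}) under the change $(\ta,\tb)\mapsto(\ba,\bb)$. Establishing that correspondence for square metrics is precisely the content of the theorem you are proving, so as written this step is circular. Your fallback --- prove (1)$\Leftrightarrow$(3) directly by computing $\bar R_{ij}$ and $\bbij$ from $\a$-quantities --- is the right idea and is what the paper actually does, obtaining $\bG=\G+\tau(\a^2b^i-3\b y^i)$, then $\bRic=0$ via (\ref{riccichange}), and $\bbij=\tau(1-b^2)^2(\aij-\bi\bj)=c\,\baij$. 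But note this is \emph{not} a routine parallel of your conformal argument: the deformation $\aij\mapsto(1-b^2)^3(\aij-\bi\bj)$ is not conformal, so the transformation formulas for the Ricci tensor and for $\nabla\b$ that you rely on in step (1)$\Leftrightarrow$(2) do not apply. One must first derive the Christoffel/spray change produced by the rank-one term $-\bi\bj$ (using that $\b$ is closed and satisfies (\ref{eq2})) before the Ricci computation can be run, and none of this is carried out in your proposal. Until that computation (or an equivalent one) is supplied, the equivalence with (3) remains unproven.
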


In particular, if we take $\ba=|y|$ and $\bb=\langle x,y\rangle$, then the Finsler metric $F$ given by (\ref{expression2}) is just the Berwald's metric (\ref{berwald}).

It's worth mentioning that the expressions (\ref{NPexpression}), (\ref{expression1}) and (\ref{expression2}) are all in the following special form
\begin{eqnarray}\label{gab}
F=\a\phi(b^2,s),\quad s=\f{\b}{\a}
\end{eqnarray}
for some smooth function $\phi(b^2,s)$. This kind of Finsler metrics belongs to a new class of Finsler metrics called {\it  general $\ab$-metrics}, which is proposed by the second author as a generalization of Randers metrics from the geometric point of view\cite{yct-zhm-onan}. General $\ab$-metrics include all the $\ab$-metrics naturally.

It is easy to see that the corresponding functions of (\ref{NPexpression}), (\ref{expression1}) and (\ref{expression2}) are given by (\ref{gab}) with
\begin{eqnarray*}
\phi(b^2,s)&=&\f{\sqrt{1-b^2+s^2}}{1-b^2}-\f{s}{1-b^2},\\
\phi(b^2,s)&=&(\sqrt{1+b^2}+s)^2
\end{eqnarray*}
and
$$\phi(b^2,s)=\f{(\sqrt{1-b^2+s^2}+s)^2}{(1-b^2)^2\sqrt{1-b^2+s^2}},$$
respectively.
It is marvelous that all these functions $\phi=\phi(b^2, s)$  satisfy a simple partial differential equation
$$\phi_{22}=2(\phi_1-s\phi_{12}),$$
where $\phi_1$ means the derivation of $\phi$ with respect to the first variable $b^2$ \cite{yct-zhm-onan}.

In our opinion, general $\ab$-metrics bring great freedom, which makes it possible to seek the most appropriate description for some specific $\ab$-metrics or general $\ab$-metrics. The expressions (\ref{expression1}) and (\ref{expression2}) are more complicated than (\ref{square}) in algebra form, but they have the advantage of clearly illuminating the underlying geometry, just as the navigation expression (\ref{NPexpression}) for Randers metrics. In 2008, the second author proved in his doctoral dissertation that the Finsler metric $F=\sq$ is locally projectively flat if and only if the corresponding Riemannian metric in (\ref{expression1}) or (\ref{expression2}) is locally projectively flat and the 1-form is closed and conformal with respect to the Riemannian metric. We believe that the expression (\ref{expression1}), especially (\ref{expression2}), are tailored for the $\ab$-metric $F=\sq$, although it seems that there is not a good physical or geometric explanation for this kind of $\ab$-metrics.

Finally, with an argument similar to that in Section 3, we can provide a new description for such kind of $\ab$-metrics with constant flag curvature.

\begin{thm}
The Finsler metric $F=\sq$ is of constant flag curvature if and only if under the expression (\ref{expression2}) of $F$, $\ba$ is locally Euclidean, $\bb$ is closed and homothety with respect to $\ba$. In a suitable local coordinates, $F$ can be expressed by
\begin{equation}
 F = \frac{(\sqrt{ (1-|\bar{x}|^2) |y|^2 + \langle \bar{x}, y\rangle^2   } +\langle \bar{x}, y\rangle )^2 }{ (1-|\bar{x}|^2)^2 \sqrt{ (1-|\bar{x}|^2) |y|^2 + \langle \bar{x}, y \rangle)^2     }},\label{Bmetric}
\end{equation}
where $\bar{x} := cx + a$
for some constant number $c$ and constant vector $a$. In particular, $F$ must be locally projectively flat with zero flag curvature.

\end{thm}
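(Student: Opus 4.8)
The strategy is to exploit that constant flag curvature simultaneously entails the Einstein condition and local projective flatness, and to merge these two inputs through Beltrami's theorem. Suppose first that $F=\sq$ has constant flag curvature $K$. Then its Ricci scalar satisfies $\mathrm{Ric}=(n-1)KF^2$, so $F$ is an Einstein metric; by Theorem \ref{main2} every Einstein square metric is Ricci flat, which forces $K=0$. Consequently Theorem \ref{main} (3) applies verbatim: the deformed data $\ba=(1-b^2)^{3/2}\sqrt{\a^2-\b^2}$ and $\bb=(1-b^2)^2\b$ satisfy $\bRic=0$ and $\bbij=c\baij$ with $c$ constant, the latter being exactly the statement that $\bb$ is closed and homothetic with respect to $\ba$. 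Thus the whole $\bb$-part of the claim, as well as the Ricci-flatness of $\ba$, is already secured, and the only additional thing to prove in the forward direction is that ``Ricci flat'' can be strengthened to ``locally Euclidean.''

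This strengthening is where projective flatness enters. By Zhou's theorem \cite{zlf}, a square metric of constant flag curvature is locally projectively flat; applying the projective-flatness criterion for square metrics recalled above (namely that $F=\sq$ is locally projectively flat iff $\ba$ is locally projectively flat and $\bb$ is closed and conformal with respect to $\ba$) shows in particular that $\ba$ is a locally projectively flat Riemannian metric. By Beltrami's theorem such a metric has constant sectional curvature, and a constant-sectional-curvature metric that is also Ricci flat must be flat; hence $\ba$ is locally Euclidean. This proves the forward implication, and makes transparent that the sole difference between Einstein and constant-flag-curvature square metrics lies in replacing ``$\ba$ Ricci flat'' by ``$\ba$ locally Euclidean.''

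For the converse, assume $\ba$ is locally Euclidean and $\bb$ is closed and homothetic. Then $\ba$ is Ricci flat and $\bbij=c\baij$ with $c$ constant, so Theorem \ref{main} (3) already gives that $F$ is Einstein; it remains to upgrade this to constant flag curvature and to produce the normal form. I would choose local coordinates in which $\ba=|y|$, so that the Levi-Civita connection of $\ba$ is trivial and $\bbij=c\baij$ becomes $\partial_j\bar b_i=c\delta_{ij}$; integrating yields $\bar b_i=cx^i+a^i$ for constants $a^i$, i.e. $\bb=\langle\bar x,y\rangle$ and $\bar b^2=|\bar x|^2$ with $\bar x=cx+a$. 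Substituting $\ba=|y|$ and $\bb=\langle\bar x,y\rangle$ into (\ref{expression2}) reproduces precisely the metric (\ref{Bmetric}).

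Finally, to certify that (\ref{Bmetric}) has constant flag curvature $K=0$ rather than merely being Einstein, I would reduce it to Berwald's metric (\ref{berwald}), which is projectively flat with $K=0$. Writing $\tilde F$ for Berwald's metric in the variable $\bar x$, the affine map $\psi:x\mapsto\bar x=cx+a$ has differential $y\mapsto cy$, and a short substitution gives $\psi^*\tilde F=cF$ when $c>0$ (the cases $c\le 0$ follow analogously, using the reverse metric when $c<0$ and the locally Minkowski structure when $c=0$). Since flag curvature is invariant under pullback by diffeomorphisms and a constant rescaling only multiplies it by a constant, the vanishing $K=0$ passes from $\tilde F$ to $F$; this simultaneously yields the final ``in particular'' assertion that $F$ is locally projectively flat with zero flag curvature. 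The step I expect to be the genuine obstacle is this last one: projective flatness together with the Einstein condition do not by themselves force the flag curvature to be constant, and it is the identification of (\ref{Bmetric}) with Berwald's metric through an affine diffeomorphism together with a constant rescaling that closes the gap.
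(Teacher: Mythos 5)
Your proposal is correct, and it is worth noting that the paper itself never writes this proof down: it only remarks that the theorem follows ``with an argument similar to that in Section 3,'' i.e.\ by recomputing, for the full Riemann curvature rather than just its Ricci trace, how curvature transforms under the deformation $(\a,\b)\mapsto(\ba,\bb)$. Your route avoids that computation entirely and instead assembles the theorem from already-available pieces: Theorem \ref{main2} (Einstein $\Rightarrow$ Ricci flat, hence $K=0$), Theorem \ref{main}~(3) (which yields $\bRic=0$ and $\bbij=c\baij$, i.e.\ $\bb$ closed and homothetic), Zhou's theorem \cite{zlf} together with the dissertation criterion quoted in the introduction to get projective flatness of $\ba$, Beltrami's theorem to upgrade ``Ricci flat'' to ``locally Euclidean,'' and, for the converse, the explicit affine identification $\psi: x\mapsto \bar{x}=cx+a$ of (\ref{Bmetric}) with Berwald's metric (\ref{berwald}), under which $\psi^*\tilde F=cF$ for $c>0$, so that $K=0$ descends to $F$. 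What the computational route hinted at by the paper buys is self-containedness (no appeal to \cite{zlf} or to the unpublished dissertation result); what your route buys is brevity and a transparent isolation of exactly where ``Einstein'' differs from ``constant flag curvature,'' namely in replacing $\bRic=0$ by flatness of $\ba$.

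One side remark in your last paragraph is wrong, though harmlessly so: you claim that projective flatness together with the Ricci-flat Einstein condition ``do not by themselves force the flag curvature to be constant.'' They do. A locally projectively flat Finsler metric is of scalar flag curvature $K=K(x,y)$ (a classical theorem of Berwald), whence $\mathrm{Ric}=(n-1)KF^2$; Ricci flatness then forces $K\equiv0$ pointwise, with no Schur-type argument needed. So in your converse you could have skipped the pullback-to-Berwald computation: $\ba$ locally Euclidean and $\bb$ closed and conformal give local projective flatness of $F$ by the dissertation criterion, and Ricci flatness from Theorem \ref{main}~(3) then gives $K\equiv 0$. The Berwald identification remains valuable, but as the derivation of the normal form (\ref{Bmetric}), not as the only way to close the argument.
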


\section{Preliminaries}

Given a Finsler metric $F$ on a $n$-dimensional smooth manifold $M$. There is a global vector field $G$ on the slit tangent bundle $TM\backslash\{0\}$ which is called  a {\it spray}. In local coordinates, $G=y^i\frac{\partial}{\partial x^i}-2G^i\frac{\partial}{\partial y^i}$ where
$$G^i=\f{1}{4}g^{il}\left\{[F^2]_{x^ky^l}y^k-[F^2]_{x^l}\right\}$$
are geodesic spray coefficients. Here $(g^{ij})$ is the inverse of the Hessian $g_{ij}=\frac{1}{2}[F^2]_{y^iy^j}$.

By Berwald's formulae, the components of the Riemann curvature tensor of $F$ are given by
\begin{eqnarray*}
R^i{}_k=2\pp{G^i}{x^k}-\f{\partial^2 G^i}{\partial x^my^k}{y^m}+2G^m\f{\partial^2G^i}{\partial y^m\partial y^k}-\pp{G^i}{y^m}\pp{G^m}{y^k}.
\end{eqnarray*}
A Finsler metric is said to be of constant flag curvature $K$ if and only if
$$R^i{}_k=K(\delta^i{}_kF^2-y^ig_{kj}y^j).$$
The Ricci curvature $Ric$ of $F$ is the trace of the flag curvature tensor. A Finsler metric is called an Einstein metric if $Ric=(n-1)cF^2$ for some scalar function $c=c(x)$ on $M$. The Schur Lemma for Ricci curvature is true for Randers metrics\cite{db-robl-onri}, but it is still a conjecture in general case.

Besides the Riemann curvature tensor, there is a non-Riemannian quantity introduced by J. Douglas given by
$$D_j{}^i{}_{kl}=\left(G^i-\f{1}{n+1}\f{\partial G^m}{\partial y^m}y^i\right)_{y^jy^ky^l}.$$
A Finsler metric is said to be Douglas if $D_j{}^i{}_{kl}=0$. Douglas metrics are those with projectively affine geodesics.

Given an $\ab$-metric $F=\pab$. Let $b_{i|j}$ denote the coefficients of the covariant derivative of
$\b=b_iy^i$ with respect to $\a$, and
$$r_{ij}=\frac{1}{2}(b_{i|j}+b_{j|i}),~s_{ij}=\frac{1}{2}(b_{i|j}-b_{j|i}),
~r_{00}=r_{ij}y^iy^j,$$
$$s_{i0}=s_{ij}y^j,~s^i{}_0=a^{ij}s_{j0},~r_0=r_{ij}b^iy^j,~s_0=s_{ij}b^iy^j.$$
It is easy to see that $\b$ is closed if and only if $s_{ij}=0$.

According to \cite{css-szm-riem}, the geodesic spray coefficients $G^i$ are given by
\begin{eqnarray*}
G^i= {}^\a G^i+\a Q s^i{}_0+\a^{-1}\Theta(-2\a Q s_0+r_{00})y^i+\Psi(-2\a Q s_0+r_{00})b^i,
\end{eqnarray*}
where
\begin{eqnarray*}
Q=\frac{\p'}{\p-s\p'},~\Theta=\frac{(\p-s\p')\p'-s\p\p''}{2\p\big(\p-s\p'+(b^2-s^2)\p''\big)},~
\Psi=\frac{\p''}{2\big(\p-s\p'+(b^2-s^2)\p''\big)},
\end{eqnarray*}
and
\begin{eqnarray}\label{ag}
{}^\a G^i=\f{1}{4}a^{il}\left\{[\a^2]_{x^ky^l}y^k-[\a^2]_{x^l}\right\}
\end{eqnarray}
are the geodesic spray coefficients of the Riemannian metric $\a$.

Furthermore, if the geodesic spray coefficients of a Finsler metric $F$ are given by
$$G^i={}^\a G^i+Q^i,$$
then the Riemann curvature  of $F$ are related to that of $\a$ and given by
$$R^i{}_j={}^\a R^i{}_j+2Q^i{}_{|j}-y^mQ^i{}_{|m.j}+2Q^mQ^i{}_{.m.j}-Q^i{}_{.m}Q^m{}_{.j},$$
where ``$|$" and  ``$.$" denote the horizontal covariant derivative and vertical covariant derivative with respect to $\a$ respectively. So
\begin{eqnarray}\label{riccichange}
\mathrm{Ric}=\Ric+2Q^i{}_{|i}-y^jQ^i{}_{|j.i}+2Q^jQ^i{}_{.j.i}-Q^i{}_{.j}Q^j{}_{.i},
\end{eqnarray}
where $\Ric$ is the Ricci curvature of $\a$.

Using a Maple program, one can obtain formulas for the Riemann curvature and the Ricci curvature \cite{cxy-szm-tyf}. Using those formulas, one can obtain equations on $\alpha$ and $\beta$ that characterize Einstein square metrics. However, due to the complexity of the formulas, people only consider some special cases when the square metric is of Douglas type or when the $1$-form of the square metric is closed.

\begin{thm}[\cite{cxy-tyf,sevim-szm-zll,zrrr}]\label{original}
Let $F=\sq$ be a Finsler metric on an $n$-dimensional manifold $M$. Suppose that $F$ is of Doulas type $(n \geq 3$) or $\beta$ is closed. Then $F$ is an Einstein metric if and only if it is Ricci flat and
\begin{eqnarray}
\Ric&=&\tau^2\left\{-[5(n-1)+2(2n-5)b^2]\a^2+6(n-2)\b^2\right\},\label{RicRic}\\
\bij&=&\tau\left\{(1+2b^2)a_{ij}-3b_ib_j\right\},\label{bij}\\
\tau_i&=&-2\tau^2b_i,\label{tau}
\end{eqnarray}
where $\tau=\tau(x)$ is a function on $M$.
\end{thm}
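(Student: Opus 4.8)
The plan is to convert the Einstein equation $\mathrm{Ric}=(n-1)c\,F^2$ into a polynomial identity in $y$ and exploit the irrationality of $\a=\sqrt{a_{ij}y^iy^j}$. First I would specialize the general spray formula to the square metric. Since here $\p(s)=(1+s)^2$, a direct computation gives $Q=\f{2}{1-s}$, $\Theta=\f{1-2s}{1+2b^2-3s^2}$ and $\Psi=\f{1}{1+2b^2-3s^2}$, so that $Q^i:=G^i-\G$ is an explicit rational expression in $s=\b/\a$ and $b^2$ whose only new denominator is $\Delta:=1+2b^2-3s^2$. Substituting $Q^i$ into (\ref{riccichange}) then expresses $\mathrm{Ric}$ as a rational function of $\a,\b$ whose coefficients are assembled from $a_{ij}$, $b_i$, the tensors $r_{ij},s_{ij}$, their covariant derivatives $r_{ij|k},s_{ij|k}$, and the Riemannian Ricci tensor $\Ric$.

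The next step is to reduce both hypotheses to the case $s_{ij}=0$. If $\b$ is closed this holds by definition, and the spray collapses to $Q^i=\Theta\a^{-1}r_{00}y^i+\Psi r_{00}b^i$. If instead $F$ is of Douglas type, the non-projective term $\a Q s^i{}_0=\f{2\a^2 s^i{}_0}{\a-\b}$ carries an irrationality in a direction independent of $y^i$ and $b^i$ that the remaining terms cannot cancel, so the vanishing of the Douglas tensor forces $s^i{}_0=0$, i.e.\ $s_{ij}=0$; this is where the restriction $n\geq 3$ enters, since in dimension $2$ the single antisymmetric component does not separate out this way. Thus in both cases one may assume $\b$ closed and work with the reduced $Q^i$. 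Computing the vertical derivatives $Q^i{}_{.j}$ is then routine, while the horizontal derivatives $Q^i{}_{|j}$ introduce $r_{ij|k}$ and $\Ric$; assembling all the pieces of (\ref{riccichange}) is the bulk of the labour.

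With an explicit $\mathrm{Ric}$ in hand I would clear the denominators (powers of $\a^2$ and of $\Delta$) and match against $(n-1)c\,(\a+\b)^4/\a^2$ to obtain a polynomial identity in $y$. Since $\a$ is irrational in $y$ whereas $\b$, $r_{00}$, $r_0$ and all tensorial coefficients are polynomial, the identity separates into a part free of $\a$ and a part equal to $\a$ times a polynomial, each of which must vanish on its own; this is what turns one equation into a solvable system. I expect the $\a$-part (fed by the $4\a\b$ and $4\b^3/\a$ terms of $F^2$) to force $c=0$, i.e.\ Ricci flatness; the terms carrying $r_{00}$ and $r_0$ to pin $\bij$ down to the shape (\ref{bij}) with a scalar $\tau$; the terms linear in $r_{ij|k}$, after (\ref{bij}) is substituted, to produce the propagation law (\ref{tau}); and the remaining $\a$-free terms to collapse to the Ricci formula (\ref{RicRic}). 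The converse follows by substituting (\ref{RicRic})--(\ref{tau}) back and verifying the identity.

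The principal obstacle is the algebraic bookkeeping: the raw expression for $\mathrm{Ric}$ carries high powers of $\Delta$ and a large number of distinct monomials in $\a,\b$, so isolating the genuinely independent equations --- and recognizing the precise combination $(1+2b^2)a_{ij}-3b_ib_j$ rather than some algebraically equivalent tangle --- requires disciplined organization, and in practice a symbolic computation is used to confirm the reduction. A more delicate point is that (\ref{tau}) is an integrability condition: it does not drop out of the leading separation but emerges only after feeding (\ref{bij}) into the derivative terms, so one must check it is genuinely forced rather than imposed. Verifying that the Douglas hypothesis really does kill $s_{ij}$, and that $n\geq 3$ is used correctly there, is the other place where care is needed.
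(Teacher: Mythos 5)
A point of comparison first: the paper does not prove Theorem \ref{original} at all. It is imported by citation from Cheng-Tian \cite{cxy-tyf}, Sevim-Shen-Zhao \cite{sevim-szm-zll} and Zohrehvand-Rezaii \cite{zrrr}; the only surrounding original content is the remark that the Douglas version ($n\geq 3$) and the closed-$\beta$ version are equivalent, because a Douglas square metric in dimension $n\geq 3$ has closed $\beta$. So your proposal can only be measured against those references, whose method --- as the paper itself says --- is to feed the (Maple-generated) Ricci curvature formula for $(\alpha,\beta)$-metrics and grind. Your plan is exactly that method, and the details you actually supply are correct: $Q=2/(1-s)$, $\Theta=(1-2s)/\Delta$, $\Psi=1/\Delta$ with $\Delta=1+2b^2-3s^2$, and separation of the resulting identity by the irrationality of $\alpha$ is indeed the engine that produces (\ref{RicRic})--(\ref{tau}).

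The gap is that for this theorem the computation is not deferrable bookkeeping; it is the entire content. Each of your conclusions --- that the Einstein condition forces Ricci flatness, that the $r_{00}$ and $r_0$ terms pin $b_{i|j}$ to the precise shape (\ref{bij}), and that (\ref{tau}) drops out as an integrability condition --- is prefaced by ``I expect'' and none is derived, whereas the theorem is precisely the assertion of what that multi-page elimination yields. In addition, your handling of the Douglas hypothesis is a heuristic, not a proof: the claim that vanishing of the Douglas tensor forces $s^i{}_0=0$ because the term $2\alpha^2 s^i{}_0/(\alpha-\beta)$ ``cannot cancel'' is exactly the delicate point. The rigorous statement is the Li-Shen-Shen classification of Douglas $(\alpha,\beta)$-metrics \cite{LSShen}, which is where the restriction $n\geq 3$ genuinely enters, and which the present paper (and the cited ones) invoke rather than re-prove; establishing it from scratch is itself a substantial separation argument of the same kind. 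As written, then, your proposal is a faithful roadmap of the cited proofs but establishes none of the three displayed equations.
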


It is known that for a square metric $F=(\alpha+\beta)^2/\alpha$ in dimension $n\geq 3$, if it is of Douglas type, then $\beta$ is closed.
Sevim-Shen-Zhao and Cheng-Tian independently show that for a square metric $F=(\alpha+\beta)^2/\alpha$ of Douglas  type in dimension $n\geq 3$,  it is Einstein if and only if   (\ref{RicRic})-(\ref{tau}) are satisfied. 
 Meanwhile, Zohrehvand-Rezaii prove that if $\beta$ is closed, then $F$ is Einstein if and only if (\ref{RicRic})-(\ref{tau}) are satisfied.  In this case, no restriction on $n$ is needed and $F$ is of Douglas type.   Thus these two versions  are equivalent at least in dimension $n\geq 3$.

In this paper, we make the following simple observation  that in any dimension, if a square metric $F=(\alpha+\beta)^2/\alpha$ is Einstein, then the $1$-form $\beta$ must be closed (Lemma \ref{closed} below). Therefore, the additional conditions in Theorem \ref{original} can  be dropped.

\begin{lem}\label{closed}
If the Finsler metric $F=\sq$ is an Einstein metric, then $\b$ is closed.
\end{lem}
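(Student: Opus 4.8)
The plan is to specialize the general $\ab$-metric machinery of Section~2 to the square case $\phi(s)=(1+s)^2$ and read off the non-Riemannian spray data. A direct computation gives $\phi-s\phi'=1-s^2$ and $\phi-s\phi'+(b^2-s^2)\phi''=1+2b^2-3s^2$, so that
$$Q=\frac{2}{1-s},\qquad \Theta=\frac{1-2s}{1+2b^2-3s^2},\qquad \Psi=\frac{1}{1+2b^2-3s^2}.$$
Writing $G^i=\G+Q^i$, the correction term is $Q^i=\alpha Q\,s^i{}_0+\alpha^{-1}\Theta(r_{00}-2\alpha Q s_0)y^i+\Psi(r_{00}-2\alpha Q s_0)b^i$. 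The only denominators that occur are $\alpha-\beta$ (from $Q$) and $(1+2b^2)\alpha^2-3\beta^2$ (from $\Theta$ and $\Psi$), and the antisymmetric part $s_{ij}$ of the covariant derivative of $\beta$ enters $Q^i$ \emph{only} through the linear-in-$y$ quantities $s^i{}_0$ and $s_0$, while the symmetric part $r_{ij}$ enters through $r_{00}$.

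Next I would substitute $Q^i$ into the Ricci-change formula (\ref{riccichange}) to express the Ricci curvature of $F$, and impose the Einstein condition $\mathrm{Ric}=(n-1)cF^2$ with $c=c(x)$ a function of position only. After clearing the two denominators $\alpha-\beta$ and $(1+2b^2)\alpha^2-3\beta^2$, this becomes a single polynomial identity in $y$. Every ingredient other than $\alpha=\sqrt{\aij y^iy^j}$ is rational in $y$, and $\alpha$ is irrational over the field of rational functions of $y$; hence the identity splits into two independent polynomial identities, namely its $\alpha$-free (rational) part and its $\alpha$-linear (irrational) part. This separation is the main structural tool, and it is precisely what makes the observation \emph{simple}: to prove closedness I do not need to solve the full Einstein system (\ref{RicRic})--(\ref{tau}), but only to extract one of its consequences.

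The crux is therefore to isolate $s_{ij}$ in the split identity. Because $s^i{}_0$ and $s_0$ are the only carriers of the antisymmetric data, the terms that are genuinely \emph{quadratic} in $s_{ij}$ and free of covariant derivatives of $s_{ij}$ can arise only from the nonlinear vertical-derivative block $2Q^jQ^i{}_{.j.i}-Q^i{}_{.j}Q^j{}_{.i}$ of (\ref{riccichange}); they cannot be produced by $\Ric$, by the symmetric $r$-terms, or by $cF^2$. I would read off the coefficient of the appropriate monomial in the rational part of the split identity: it is a fixed nonzero multiple of the self-contraction $s^i{}_0 s_{i0}$, a quadratic form in $y$ whose coefficient matrix is (up to sign) $s_{ik}s^i{}_j$. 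Since nothing else can cancel this contribution, it must vanish identically in $y$, which forces $s^i{}_0=0$ for all $y$ and hence $s_{ij}=0$, i.e. $\beta$ is closed.

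The main obstacle is the sheer size of $\mathrm{Ric}$: as noted in Section~2 it is only tractable through the symbolic computation of \cite{cxy-szm-tyf}, and the delicate point is to pin down precisely which monomial in the separated identity isolates the self-contraction of $s_{ij}$ with a definite, uncancellable coefficient, while checking that this step is uniform in the dimension $n$ (in particular that no trace identity used in the separation degenerates for small $n$). Once $\beta$ is shown to be closed, the hypothesis of Theorem~\ref{original} is automatically satisfied in every dimension, which is exactly the use to which this lemma is put.
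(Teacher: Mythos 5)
Your specialization of the machinery to $\phi(s)=(1+s)^2$ is correct ($Q$, $\Theta$, $\Psi$ all check out), but the step you call the crux rests on a structural claim that is false. You assert that terms quadratic in $s_{ij}$ and free of covariant derivatives of $s_{ij}$ ``can arise only from the nonlinear vertical-derivative block $2Q^jQ^i{}_{.j.i}-Q^i{}_{.j}Q^j{}_{.i}$.'' In fact the horizontal blocks $2Q^i{}_{|i}-y^jQ^i{}_{|j.i}$ also produce such terms, because the horizontal derivative acts on the coefficient functions $Q(s)$, $\Theta(b^2,s)$, $\Psi(b^2,s)$ and on $b^i$ via
$s_{|j}=(r_{j0}-s_{j0})/\alpha$, $(b^2)_{|j}=2(r_j+s_j)$, $b^i{}_{|j}=r^i{}_j+s^i{}_j$,
and these factors multiply the $s$-linear quantities $s^i{}_0$, $s_0$ already present in $Q^i$. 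Concretely, $2Q^i{}_{|i}$ contains the contribution $2\alpha Q'(s)\,s_{|i}\,s^i{}_0=2Q'\left(r_{i0}s^i{}_0-s_{i0}s^i{}_0\right)$, i.e.\ exactly the contraction $s^i{}_0s_{i0}$ you propose to isolate already enters from the horizontal block with a nonvanishing coefficient $-2Q'$. So the coefficient you would ``read off'' cannot be computed from the vertical block alone, and your bookkeeping of where the antisymmetric data can appear is wrong.

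There is a second, independent problem: even after assembling the true coefficient of $s^i{}_0s_{i0}$, the inference ``nothing else can cancel this contribution, hence it vanishes'' is not valid as stated. At a fixed point $x$ the Einstein condition is a polynomial identity in $y$ alone; the quantities $\Ric$, $r_{ij}$, $s_{ij}$, their covariant derivatives and the cross terms $r_{i0}s^i{}_0$, $r_0 s_0$, $s^i{}_{0|i}$, etc.\ are fixed tensors, not independent formal indeterminates, so distinct ``types'' of terms contribute to the same $y$-monomials and can in principle cancel. Establishing that the specific combination $s^k{}_0s_{k0}$ must vanish is precisely the nontrivial computation that your outline defers (you acknowledge the delicate point is unresolved). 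The paper avoids all of this by citing the proof of Lemma 4.2 in \cite{zlf}: that computation shows that constant Ricci curvature already forces $s^k{}_0s_{k0}=0$ together with $r_{ij}=\tau\{(1+2b^2)a_{ij}-3b_ib_j\}$, and then closedness is immediate because $s^k{}_0s_{k0}=a^{kl}s_{k0}s_{l0}$ is the squared $\alpha$-norm of the $1$-form $s_{k0}$, so its vanishing for all $y$ gives $s_{ij}=0$. As it stands, your proposal is a plan whose key step is both unproven and, in the form stated, incorrect.
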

\begin{proof}
By the proof of Lemma 4.2 in \cite{zlf}, we can see that if $F=\sq$ has constant Ricci curvature, then
$\b$ must satisfy
\begin{eqnarray}\label{sij}
s^k{}_0s_{k0}=0,\quad r_{ij}=\tau\{ (1+2b^2)a_{ij}-3 b_i b_j\},
\end{eqnarray}
where $\tau$ is a smooth function on the manifold. The first condition means $s_{k0}$~(as a vector)~is zero, so $\b$ is closed.
\end{proof}

We can  show that the function $\tau$ in Theorem \ref{original} depends only on the length of $\beta$.
\begin{lem}\label{tau}
Under the conditions of Theorem \ref{original},
$$\tau=c(1-b^2),$$
where $c$ is a constant.
\end{lem}
\begin{proof}
It is only have to verify that the derivative of $\frac{\tau}{1-b^2}$ is zero.
\end{proof}

With the above lemmas, we can prove Theorem \ref{main2}.

\begin{proof}[Proof of Theorem \ref{main2}]
Suppose that $F=\sq$ is an Einstein metric, then by Lemma \ref{closed} and the second equality of (\ref{sij}) we have
$$b_{i|j}=\tau\{ (1+2b^2)a_{ij}-3b_i b_j \} ,$$
which means that $F$ is of Douglas type \cite{LSShen}. So Theorem \ref{original} and Lemma \ref{tau} imply Theorem \ref{main2} immediately.
\end{proof}

\section{Proof of Theorem \ref{main}}

Suppose that $\ta=(1-b^2)\a$, then by (\ref{ag}) and (\ref{bij}) we have
$$\tG=\G+\tau(\a^2b^i-2\b y^i).$$
Let $\tilde Q^i=\tau(\a^2b^i-2\b y^i)$, then
\begin{eqnarray*}
\tilde Q^i{}_{|i}&=&\tau^2\left\{(n-2)(1+2b^2)\a^2-5(b^2\a^2-2\b^2)\right\},\\
y^j\tilde Q^i{}_{|j.i}&=&-2n\tau^2\left\{(1+2b^2)\a^2-5\b^2\right\},\\
\tilde Q^i{}_{.j}\tilde Q^j{}_{.i}&=&-4\tau^2\left\{2b^2\a^2-(n+2)\b^2\right\},\\
\tilde Q^j\tilde Q^i{}_{.j.i}&=&-2n\tau^2(b^2\a^2-2\b^2).
\end{eqnarray*}
So by (\ref{riccichange}) and Lemma \ref{tau} we obtain
$$\tRic=-(n-1)\tau^2\a^2=-(n-1)c^2\ta^2.$$
On the other hand, direct computations show that
\begin{eqnarray*}
\tbij&=&\tau\sqrt{1-b^2}\aij=\f{c}{\sqrt{1-b^2}}\taij,
\end{eqnarray*}
so $\tbij=c\sqrt{1+\tilde b^2}\taij$ because
$$(1-b^2)(1+\tilde b^2)=1.$$

Conversely, one can verify directly that if $\ta$ and $\tb$ satisfy (\ref{tildeab}), then the Riemannian metric $\a=(1+\tilde b^2)\ta$ and the 1-form $\b=\sqrt{1+\tilde b^2}\tb$ satisfy (\ref{eq1}) and (\ref{eq2}) since all the argument above are reversible.

\noindent 1~$\Leftrightarrow$~3

Suppose that $\ba=(1-b^2)^\frac{3}{2}\sqrt{\a^2-\b^2}$, then by (\ref{ag}) and (\ref{bij}) we have
$$\bG=\G+\tau(\a^2b^i-3\b y^i).$$
Let $\bar Q^i=\tau(\a^2b^i-3\b y^i)$, then
\begin{eqnarray*}
\bar Q^i{}_{|i}&=&\tau^2\left\{(n-3)(1+2b^2)\a^2-5(b^2\a^2-3\b^2)\right\},\\
y^j\bar Q^i{}_{|j.i}&=&-(3n+1)\tau^2\left\{(1+2b^2)\a^2-5\b^2\right\},\\
\bar Q^i{}_{.j}\bar Q^j{}_{.i}&=&-\tau^2\left\{12b^2\a^2-(9n+19)\right\},\\
\bar Q^j\bar Q^i{}_{.j.i}&=&-(3n+1)\tau^2(b^2\a^2-3\b^2).
\end{eqnarray*}
So by (\ref{riccichange}) and Lemma \ref{tau} we obtain
$$\bRic=0.$$
On the other hand, direct computations show that
\begin{eqnarray*}
\bbij&=&\tau(1-b^2)^2(\aij-\bi\bj)=c\baij.
\end{eqnarray*}
In this case,
$$\bar b=b.$$

Conversely, one can verify directly that if $\ba$ and $\bb$ satisfy (\ref{barab}), then the Riemannian metric $\a=(1-\bar b^2)^{-2}\sqrt{(1-\bar b^2)\ba^2+\bb^2}$ and the 1-form $\b=(1-\bar b^2)^{-2}\bb$ satisfy (\ref{eq1}) and (\ref{eq2}) since all the argument above are reversible.

\section{Examples}

Using Theorem \ref{main}, one can easily determine Einstein square metrics up to the classification of Einstein metrics with {non-negative} Ricci constant.  First we need the following

\begin{lem}[\cite{pete}]\label{petersen}
If there are smooth functions $f$ and $\lambda$ on a Riemannian manifold $(M,\alpha)$ such that $\mathrm{Hess}_\alpha f=\lambda\alpha^2$, then the Riemannian structure is a warped product {around any point where $\ud f\neq0$.} In particular, $\alpha^2=\ud t\otimes\ud t+(f'(t))^2\breve\alpha^2$ where $f$ depends on the parameter $t$.
\end{lem}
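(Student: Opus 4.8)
The plan is to read the hypothesis $\mathrm{Hess}_\alpha f=\lambda\alpha^2$ as the assertion that the gradient field $N=\nabla f$ obeys $\nabla_X N=\lambda X$ for every vector field $X$, and then to construct geodesic coordinates adapted to the level sets of $f$. First I would show that both $|\nabla f|^2$ and $\lambda$ are constant on each level set. For $X$ tangent to a level set one has $X(f)=0$, hence $X(|\nabla f|^2)=2\alpha(\nabla_X\nabla f,\nabla f)=2\lambda\,X(f)=0$, so $|\nabla f|^2=h(f)$ for some one-variable function $h$; differentiating in the direction $\nabla f$ and comparing with $X(|\nabla f|^2)=2\lambda\,X(f)$ gives $h'(f)=2\lambda$, so $\lambda=\tfrac12 h'(f)$ is also a function of $f$ alone.

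Next, on the open set where $\ud f\neq0$ I would introduce the arc-length parameter $t$ along the integral curves of $N/|N|$. Since $\nabla_{\nabla f}\nabla f=\lambda\nabla f$ is proportional to $\nabla f$, these curves are reparametrized geodesics meeting the level sets orthogonally, and the Gauss lemma yields an orthogonal splitting $\alpha^2=\ud t\otimes\ud t+g_t$, where $g_t$ is the metric induced on the level hypersurface at parameter $t$. Here $f$ becomes a function of $t$ with $f'(t)=N(f)=|\nabla f|=\sqrt{h(f)}$, and differentiating $|\nabla f|^2=h(f)$ along $N$ gives $f''(t)=\lambda$.

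The core of the argument is the shape operator $S$ of the level sets. For $X$ tangent to a level set, $SX=\nabla_X N=\nabla_X\big(\nabla f/|\nabla f|\big)=\lambda X/|\nabla f|$, the second term vanishing because $|\nabla f|$ is constant on level sets; thus each level set is totally umbilic with principal curvature $\lambda/|\nabla f|$ depending only on $t$. The first-variation formula for the induced metric under the unit-normal flow then reads $\partial_t g_t=2(\lambda/|\nabla f|)\,g_t$ (the factor being twice the second fundamental form), a scalar linear ODE in $t$, so $g_t=\phi(t)^2\breve\alpha^2$ with $\breve\alpha^2$ the metric on a fixed reference level set and $\phi'/\phi=\lambda/|\nabla f|=f''/f'$. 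Integrating gives $\phi(t)=f'(t)$ up to a multiplicative constant, which I absorb into $\breve\alpha$, producing $\alpha^2=\ud t\otimes\ud t+(f'(t))^2\breve\alpha^2$.

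I expect the main difficulty to lie not in the formal computation but in the regularity bookkeeping: one must restrict to the region $\ud f\neq0$ so that the normal geodesic congruence and the parameter $t$ are well defined, and then carefully justify the absence of cross terms $\ud t\,\ud x^i$ via the Gauss lemma before integrating the shape-operator ODE. As this is a classical fact (a concircular gradient field forces a local warped-product structure), in the paper itself I would simply invoke \cite{pete}; the sketch above merely records the structure of that standard argument.
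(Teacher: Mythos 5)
The paper itself gives no proof of this lemma: it is imported verbatim from the cited reference \cite{pete}, exactly as you propose to do. Your sketch is a correct reconstruction of the standard argument behind that citation --- constancy of $|\nabla f|^2$ and $\lambda$ on level sets, the unit gradient flow being a geodesic congruence, total umbilicity of the level sets, and integration of the resulting scalar ODE $\partial_t g_t = 2(f''/f')\,g_t$ --- so there is nothing to correct and no divergence from the paper's treatment.
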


The Riemannian metric $\bar{\alpha}$ and the  $1$-form $\bar{\beta}$ in Theorem \ref{main} (3) can be determined.

\begin{prop}\label{c}
Let $\ba$ be a Riemannian metric on an $n$-dimensional manifold $M$ and $\bb$ an $1$-form ($n\geq 3$). Then $\ba$ is Ricci flat and $\bbij=c\baij$ for some constant number $c$ if and only if $\ba$ is locally a warped product metric on $\RR\times\breve{M}$ and $\bb$ is an $1$-form determined by the first factor $\RR$:
\begin{eqnarray}
\ba^2&=&\ud t\otimes\ud t+(ct+d)^2\breve{\alpha}^2,\\
\bb&=&(ct+d)\ud t
\end{eqnarray}
for some constant number $d$ satisfying $c^2+d^2\neq0$, where $\breve{\alpha}$ is an Einstein metric on $\breve{M}$ with
$$\brRic=(n-2)c^2\bra^2.$$
\end{prop}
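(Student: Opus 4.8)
The plan is to prove both directions by converting the condition $\bbij=c\baij$ into a Hessian equation and feeding it into Lemma~\ref{petersen}, after which the only remaining work is a warped-product curvature computation.

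For the ``only if'' direction, I would first note that since $c\baij$ is symmetric in $i,j$, the tensor $\bbij$ is symmetric, so $\bb$ is closed and may be written locally as $\bb=\ud f$. Then $\bbij$ is exactly the Hessian $\mathrm{Hess}_{\ba}f$, and the hypothesis reads $\mathrm{Hess}_{\ba}f=c\,\ba^2$, which is precisely the setting of Lemma~\ref{petersen} with $\lambda\equiv c$. On the open set where $\bb=\ud f\neq0$, Lemma~\ref{petersen} supplies the splitting $\ba^2=\ud t\otimes\ud t+(f'(t))^2\bra^2$ with $f$ a function of $t$ alone and $\bb=f'(t)\,\ud t$. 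Evaluating the Hessian equation in these coordinates, the $(\partial_t,\partial_t)$-component gives $f''(t)=c$, hence $f'(t)=ct+d$ for some constant $d$; this produces the asserted forms $\ba^2=\ud t\otimes\ud t+(ct+d)^2\bra^2$ and $\bb=(ct+d)\,\ud t$, and the constant $d$ is free while $c^2+d^2\neq0$ simply records that the warping function $ct+d$ (equivalently $\bb$) is not identically zero.

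Next I would encode Ricci-flatness. Plugging $h(t)=ct+d$, hence $h''=0$ and $h'=c$, into the standard warped-product Ricci formulas for $\ud t^2+h^2\bra^2$ with fiber dimension $n-1$: the mixed components vanish and the $\partial_t$-direction component equals $-(n-1)h''/h=0$ automatically, so $\bRic=0$ reduces to the single fiber equation $\brRic-\big(hh''+(n-2)(h')^2\big)\bra^2=\brRic-(n-2)c^2\bra^2=0$. Thus Ricci-flatness of $\ba$ is equivalent to $\brRic=(n-2)c^2\bra^2$, i.e.\ $\bra$ is Einstein with the stated constant. The ``if'' direction is then obtained by running these steps backward: for $f=\tfrac{c}{2}t^2+dt$ a short Christoffel-symbol computation in the warped metric gives $\mathrm{Hess}_{\ba}f=c\,\ba^2$, i.e.\ $\bbij=c\baij$, and the same Ricci formula with $\brRic=(n-2)c^2\bra^2$ yields $\bRic=0$.

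I expect the main obstacle to be the warped-product curvature bookkeeping: one must reproduce the Ricci coefficients for $\ud t^2+h^2\bra^2$ correctly and observe that it is precisely the affineness $h''=0$ that kills the $(\partial_t,\partial_t)$- and mixed Ricci components, leaving only the fiber condition. A secondary point requiring care is the local nature of the result: Lemma~\ref{petersen} applies only away from the zeros of $\bb$, so the warped-product description and the reparametrization by $t$ should be carried out on the open set where $\bb\neq0$, which is where the condition $c^2+d^2\neq0$ guarantees we are working.
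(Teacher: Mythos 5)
Your proposal is correct and takes essentially the same route as the paper: symmetry of $\bbij$ gives closedness of $\bb$, hence a local potential $f$; Lemma~\ref{petersen} yields the warped-product splitting $\ba^2=\ud t\otimes\ud t+h(t)^2\bra^2$; and the standard warped-product Ricci formula reduces $\bRic=0$ to the fiber condition $\brRic=(n-2)c^2\bra^2$. The only (harmless) difference is the order of deductions: you obtain $h(t)=ct+d$ directly from the Hessian equation (the $(\partial_t,\partial_t)$-component gives $f''=c$), whereas the paper first extracts $h''=0$ from Ricci-flatness and the $y^1$-independence of $\brRic$, and then gets $h'=c$ from the computation $\bbij=h'(t)\baij$.
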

\begin{proof}
Because $\bb$ is closed, we can assume that locally $\bb=\ud f\neq0$ for some smooth function. It is easy to see that the condition $\bbij=c\baij$ is equivalent to $\mathrm{Hess}_{\ba}f=c\ba^2$. By Lemma \ref{petersen}, $\ba=\ud t\otimes\ud t+h^2(t)\breve{\alpha}^2$ is locally a warped product metric on $\RR\times\breve{M}$ where $h(t)=f'(t)$. By taking the trace of the Riemannian curvature tensor the warped product metric $\ba$ given in \S~13.3 of \cite{bao-cheng-shen}, we have
$$\bRic={}^{\breve{\alpha}}\mathrm{Ric}-(n-1)\f{h''}{h}y^1y^1-\left[h''h+(n-2)(h')^2\right]\breve\alpha^2.$$
Because $\ba$ is Ricci flat, we obtain
$${}^{\breve{\alpha}}\mathrm{Ric}-(n-1)\f{h''}{h}y^1y^1-\left[h''h+(n-2)(h')^2\right]\breve\alpha^2=0.$$
Note that $\breve{\alpha}$ and ${}^{\breve{\alpha}}\mathrm{Ric}$ are both independent of $y^1$, the above equality means
$$h''(t)=0,\quad{}^{\breve{\alpha}}\mathrm{Ric}=(n-2)(h')^2\breve\alpha^2.$$
On the other hand, a direct computation shows that the covariant derivation of $\bb=h(t)\,\ud t$ with respect to $\ba$ is given by $\bbij=h'(t)\baij$, so $h'(t)=c$ and hence $h(t)=ct+d$ for some constant number $d$.
\end{proof}

By Theorem \ref{main} and Proposition \ref{c}, one can determine Einstein square metrics.
Note that if the dimension of the manifold is three or four, then $\breve{\alpha}$ in Proposition \ref{c} must have constant sectional curvature $\breve{K}=1$. Thus $\bar{\alpha}$ is flat and
$F$ must have vanishing flag curvature $K=0$.  Namely, an Einstein square metric in dimension $\leq 4$ must be locally isometric to the metric in (\ref{Bmetric}).

\noindent Zhongmin Shen\\
Department of Mathematical Sciences, Indiana University-Purdue University, Indianapolis, IN 46202-3216, USA\\
zshen@math.iupui.edu
\newline
\newline
\newline
\noindent Changtao Yu\\
School of Mathematical Sciences, South China Normal
University, Guangzhou, 510631, P.R. China\\
aizhenli@gmail.com
\end{document}